\newcommand{\ZL}{\color{red}}
\newtheorem{thm}{Theorem}
\newtheorem{cor}[thm]{Corollary}
\newtheorem{lem}[thm]{Lemma}
\newtheorem{prop}[thm]{Proposition}
\newtheorem{defn}[thm]{Definition}
\newtheorem{rem}[thm]{Remark}
\newtheorem{example}[thm]{Example}
\newtheorem{assumption}{Assumption}
\newcommand{\R}{\ensuremath{\mathbb R}}    
\newcommand{\N}{\ensuremath{\mathbb N}}    
\newcommand{\iso}{\circ}
\newcommand{\<}{\langle}
\renewcommand{\>}{\rangle}
\newcommand{\calA}{\mathcal A}
\newcommand{\calK}{\mathcal K}
\newcommand{\eps}{\varepsilon}
\newcommand{\bmat}[4]
{
	\begin{bmatrix}
		#1 & #2\\
		#3 & #4
	\end{bmatrix}
}
\newcommand{\bvec}[2]
{
	\begin{bmatrix}
		#1\\
		#2
	\end{bmatrix}
}
\newcommand{\sbmat}[4]{\left[\begin{smallmatrix}#1 & #2\\#3 & #4\end{smallmatrix}\right]}
\newcommand{\sbvec}[2]{\left[\begin{smallmatrix}#1\\#2\end{smallmatrix}\right]}
\newcommand{\wt}{\widetilde}
\title{\LARGE \bf 
Strict Dissipativity and turnpike for LQ Optimal Control Problems with Possibly Boundary Reference
}
\author{Zhuqing Li and Roberto Guglielmi%
\thanks{*The authors acknowledge the support of the Natural Sciences and Engineering Research Council of Canada (NSERC), funding reference number RGPIN-2021-02632}
\thanks{$^{1}$Zhuqing Li is with the Department of Mechanical and Aerospace Engineering, University of California San Diego,
        La Jolla, CA 92093, USA
        {\tt\small zhl160@ucsd.edu}}%
\thanks{$^{2}$Roberto Guglielmi is with the Department of Applied Mathematics, University of Waterloo,
        Waterloo, ON N2L 3G1, Canada
        {\tt\small roberto.guglielmi@uwaterloo.ca}}%
}
\begin{document}

\maketitle
\thispagestyle{empty}
\pagestyle{empty}

\begin{abstract}
In this paper we investigate the turnpike property for constrained LQ optimal control problem in connection with dissipativity of the control system. We determine sufficient conditions to ensure the turnpike property in the case of a turnpike reference possibly occurring on the boundary of the state constraint set.
\end{abstract}

\section{Introduction}
Turnpike phenomena concern qualitative and quantitative properties of optimally controlled dynamics. More precisely, turnpike refers to the property of optimal solutions to Optimal Control Problems (OCPs) to approach a prescribed reference of the control system and stay close to it during most of the time evolution. Turnpike phenomena were first reported by Ramsey \cite{Ramsey} in 1928 and von Neumann \cite{VonNeumann} in 1945 in the context of mathematical economics. Dorfman et al. \cite{Dorfman} first used the term turnpike for such property. Since then, turnpike phenomena have received continuous interest in economics literature because of the structural insights they provide on the structure of the optimal solutions~\cite{Samuelson,Wilde,McKenzie,McKenzie1963}.
Turnpike properties have been extensively studied by the mathematical control community, investigating several sufficient conditions to ensure such behavior. 
We refer to \cite{Zaslavski06,Zaslavski14,Larssurvey} for an overview on turnpike properties for various control systems and a comprehensive literature review. The applications of turnpike are multifold. For instance, turnpike theory has been used as a method of synthesizing long-term optimal trajectories \cite{Larssyn,Zua2016,Riccati}, analyzing Model Predictive Control (MPC) schemes \cite{LarsMPC2016,LarsMPC2017} and shape optimization in aircraft design \cite{Lance}. 
In the last decades, the interplay between economic MPC~\cite{Rawlings} and turnpike properties has received great attention and led to the study of the connection between turnpike and dissipativity. 

Dissipativity notions, as introduced by Willems \cite{Willems1,Willems2}, naturally characterize an abstract energy balance of the dynamical system in terms of a storage function and a supplied energy. In~\cite{Diehl}, dissipativity is exploited to construct a Lyapunov function for the economic MPC-closed loop. Later in \cite{Angeli}, the authors prove the equivalence between strong duality and strict dissipativity with a linear storage function, which allows to use the same Lyapunov function in a more general nonlinear setting. In both cases, the running (or stage) cost in the optimal control problem was used as the supply rate in the dissipativity formulation. This idea can be adapted to investigate the sufficient conditions for turnpike properties by adding a dissipation rate in the supply rate. Some early results in this direction can be found in \cite{Willemsearly,Carlsonbook}. Then in the paper \cite{Lars2013}, it was observed that strict dissipativity plus a suitable controllability condition imply turnpike. Following this result, in \cite{Lars2016}, several equivalence statements between strict dissipativity properties
and the turnpike properties were proposed in the nonlinear setting. In recent papers \cite{LarsGugdis,LarsGugcont}, an if-and-only-if characterization of the so-called measure turnpike property in terms of the stabilizability and detectability for unconstrained Linear Quadratic (LQ) OCPs was deduced in discrete- and continuous-time, respectively. 


{
Despite the characterization of the turnpike in terms of structural control properties of the system is by now well-understood for unconstrained problems in both finite~\cite{LarsGugdis,LarsGugcont} and infinite dimensional~\cite{necessary} settings, many questions are still open in the constrained case. In particular,~\cite{LarsGugdis,LarsGugcont} provide sufficient conditions for turnpike property for constrained problems, assuming that the turnpike reference lies in the interior of the constraint set. However, the turnpike reference may occur on the boundary of the constraint set, or even depend on the choice of the constraint set itself. As noted in~\cite{Larssurvey}, it is currently an open question whether this case can be addressed by dissipitivity-based method.}

Our work addresses this issue for a class of LQ OCPs with a convex constraint set. To the best of our knowledge, the paper presents the first sufficient condition that ensures the occurrence of the turnpike property with turnpike reference possibly on the boundary of the constraint set. {Our method relies on the dissipitivity approach in \cite{LarsGugdis} via a perturbed estimate for the constrained optimal steady state, 
and makes use of the KKT conditions to deduce the so-called strict (pre-)dissipativity of the constrained OCP. In combination with a generalized stabilizability assumption, this eventually ensures the turnpike property. Thanks to a characterization of strict dissipativity in terms of detectability of the system, we are thus able to show the turnpike property under classical} structural properties of the OCP.

Our paper is organized as follows. In Section \ref{setting}, we describe the mathematical setting of the constrained LQ OCP and introduce the notions of dissipativity and turnpike. 
In Section \ref{existing} we show the existence and uniqueness of solutions to the constrained optimal steady state. 
In Section~\ref{mainresults}, we state and prove our main result, which shows that strict dissipativity and a stabilizability condition imply the measure turnpike property, where the turnpike reference may lie on the boundary of the constraint set. We conclude the paper with two illustrative examples.

\section{Mathematical setting}\label{setting}
We consider the following constrained LQ optimal control problem $(CLQ)_N$ in discrete-time
\begin{equation}\label{cost}
\mathop{\rm minimize}_{u\in \calA^N(x_0)}\ \displaystyle\Sigma_{i=0}^{N-1} \ell(x(i),u(i)),
\end{equation}
where the stage cost $\ell$ is defined by
\begin{equation}\label{runcost}
\ell(x,u):=x^TQx+u^TKu+2z^Tx+2v^Tu+c
\end{equation}
and the pair $(x(\cdot),u(\cdot))$ is subject to
\begin{equation}\label{dynamic}
\begin{array}{l}
x(i+1)=Ax(i)+Bu(i),\quad i = 0,\ldots,N-1,\\[0.3ex]
x(0)=x_0\in \R^n.
\end{array}
\end{equation}
We assume $N\in\N$, $A\in \R^{n\times n}$, $B\in \R^{n\times m}$, $Q\in \R^{n\times n}$ and $R\in \R^{m\times m}$. Moreover, we further assume that $Q$ is positive semi-definite and $R$ is positive definite, so that there exist $C\in\R^{n\times n}$ and invertible $K\in \R^{m\times m}$ such that $Q=C^TC$ and $R=K^TK$. We denote by $S \subset \R^n\times \R^m$ the constraint set on the state/control pair, and define
\begin{equation*}
X:=\{x\in\R^n\;|\;\text{there exists}\;u\in\R^m\;\text{s.t.}\;(x,u)\in S\}.
\end{equation*}
We say that the pair $(x(\cdot),u(\cdot))$ is admissible if
\begin{equation}\label{constraint}
(x(i),u(i))\in S,\;\;\forall i=0,1,...,N-1,\;\;x(N)\in X.
\end{equation}
We assume that $S$ can be represented as the sublevel set of some convex function, i.e.,
\begin{equation}\label{constraintrepre}
S=\{(x,u)\in\R^{n+m}\;|\;g(x,u)\leq 0\}
\end{equation}
for some convex function $g : \R^n\times \R^m\to\R$, and the boundary of $S$ is characterized by the zero level set of $g$:
\begin{equation*}
\partial S=\{(x,u)\in\R^{n+m}\;|\;g(x,u)=0\}.
\end{equation*}
{Then it is easy to verify that $S$ and $X$ are closed convex sets.} We denote by $\calA^N(x_0)$ (resp. $\calA^\infty(x_0)$) the set of all admissible controls corresponding to time horizon $N$ (resp. $\infty$) and initial condition $x_0\in \R^n$. In other words, $u\in \calA^N(x_0)$ (resp. $\calA^\infty(x_0)$) if and only if the pair $(x(\cdot),u(\cdot))$ satisfies the condition~\eqref{constraint} over the time horizon $0,1,\ldots, N$ (resp. $0,1,\ldots$).
In the remaining part of this paper, we will use $x^*_N(\cdot,x_0)$ and $u^*_N(\cdot,x_0)$ to denote the optimal trajectory and optimal control corresponding to time horizon $N$ and initial condition $x_0$, if the optimal pair exists. {Notice that, if $\calA^\infty(x_0)\neq\emptyset$, then the optimal pair exists and is unique. The proof is much similar to \cite[Theorem 3.1.1]{thesis}.}

The notions of dissipativity used in the paper involve the notions of storage functions and dissipation rates. The latter are defined in the class
\begin{align*}
\calK:=\{f:\R^+\to\R^+&\;|\;f\;\mathrm{continuous},\;\\
&\mathrm{strictly\;increasing\;with}\;f(0)=0\}.
\end{align*}
The pair $(x_e,u_e)\in\R^{n+m}$ is called a \emph{steady state} of the constrained control system~\eqref{dynamic}-\eqref{constraint} if $(x_e,u_e)\in S$ and $x_e=Ax_e+Bu_e$.
\begin{defn}
We say that the constrained OCP \eqref{cost}-\eqref{dynamic} is strict pre-dissipative at some steady state $(x_e,u_e)\in S$ if there exists a storage function $V:\R^n\to\R$ which is bounded on bounded subsets of $\R^n$, a norm $\|\cdot\|_{\mathrm{diss}}$ defined on $\R^n\times\R^m$, and a dissipation rate $\alpha\in\calK$ such that
\begin{align*}
V(Ax+Bu)\leq &V(x)+\ell(x,u)-\ell(x_e,u_e)\\
&-\alpha(\|(x-x_e,u-u_e)\|_{\mathrm{diss}}),\;\;\forall (x,u)\in S.
\end{align*}
Moreover, we say that the problem \eqref{cost}-\eqref{dynamic} is strict dissipative at $(x_e,u_e)$ if it is strict pre-dissipative and, in addition, the storage function $E$ is bounded from below on {$X$}. 
\end{defn}
For the notion of turnpike in the discrete-time setting, we recall the following definition from \cite[Definition 2.1]{LarsGugdis}.
\begin{defn}\label{def:turnpike}
We say that the constrained OCP \eqref{cost}-\eqref{dynamic} satisfies the measure turnpike property at some steady state $(x_e,u_e)\in S$ on the initial set $X_{\mathrm{tp}}\subset X$ if, for any $\eps>0$, there exists some constant $N_{\eps}\in\N^+$ such that, for any $x_0\in X_{\mathrm{tp}}$ and $N\in\N^+$, if the optimal pair exists, then
\begin{align*}
\#\big\{ i= 0,\ldots,N-1 \; \big| \; \| &x^*_N(i,x_0)-x_e\|^2\\
&+\| u^*_N(i,x_0)-u_e\|^2 >\eps \big\} \leq N_{\eps},
\end{align*}
where {we have used $\|\cdot\|$ to denote the corresponding $l^2$ norm and $\#$ to denote the cardinality of some finite set.}
\end{defn}
\section{Constrained optimal steady state}\label{existing}
We now turn to the analysis of the constrained optimal steady state problem, which will provide a characterization of the turnpike reference. We start by introducing the notion of unobservable eigenvalues.
\begin{defn}\label{def:unobs}
We say that $\gamma \in \mathbb{C}$ is an unobservable eigenvalue of the pair $(A,C)$ if {$\gamma$ is an eigenvalue of $A$ and there exists a non-trivial eigenvector $x_0\in\R^n$ corresponding to the eigenvalue $\gamma$ such that $x_0\in\ker C$.}
\end{defn}
\begin{rem}
In the discrete-time setting, an element $x_0\in\R^n\setminus\{0\}$ is unobservable if $CA^ix_0=0$ for all $i\in\N$. However, if $x_0$ is an eigenvector of $A$, then clearly the above condition is equivalent to $x_0\in\ker C$.
\end{rem}
\begin{lem}\label{posidefin}
Assume that each unobservable eigenvalue $\gamma$ of the pair $(A,C)$ satisfies $|\gamma| \neq1$. Then the matrix
\begin{equation*}
\bmat{Q}{0}{0}{R}=\bmat{C^{T}C}{0}{0}{K^{T}K}
\end{equation*}
is positive definite on {the kernel space of the block matrix $[A-I\ B]$.}
\end{lem}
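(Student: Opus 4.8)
The plan is to work directly with the quadratic form attached to the block matrix and show it is strictly positive on every nonzero element of the kernel. Writing $Q=C^TC$ and $R=K^TK$, for any $(x,u)\in\R^{n+m}$ we have
$\begin{bmatrix}x\\u\end{bmatrix}^T\begin{bmatrix}Q&0\\0&R\end{bmatrix}\begin{bmatrix}x\\u\end{bmatrix}=x^TQx+u^TRu=\|Cx\|^2+\|Ku\|^2$,
which is manifestly nonnegative. The entire content of the lemma is therefore that this expression cannot vanish at a nonzero pair $(x,u)$ lying in $\ker[A-I\ \ B]$, i.e. at a pair satisfying $(A-I)x+Bu=0$.

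First I would fix such a pair $(x,u)\in\ker[A-I\ \ B]$ and argue by contradiction, assuming $\|Cx\|^2+\|Ku\|^2=0$. This forces $Cx=0$ and $Ku=0$ simultaneously. Since $R$ is positive definite, $K$ is invertible, so $Ku=0$ gives $u=0$ at once, which eliminates the control component entirely. The whole difficulty thus reduces to the state component and the eigenvalue $1$.

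With $u=0$, the kernel relation $(A-I)x+Bu=0$ collapses to $(A-I)x=0$, that is $Ax=x$. If the pair $(x,u)$ is nonzero, then $x\neq0$ because $u=0$, so $x$ is a nonzero real eigenvector of $A$ associated with $\gamma=1$, and moreover $x\in\ker C$ since $Cx=0$. By Definition~\ref{def:unobs} this makes $\gamma=1$ an unobservable eigenvalue of the pair $(A,C)$; but $|\gamma|=1$, contradicting the standing hypothesis that every unobservable eigenvalue has modulus different from $1$. Hence no nonzero kernel element annihilates the quadratic form, which is exactly positive definiteness of $\begin{bmatrix}Q&0\\0&R\end{bmatrix}$ on $\ker[A-I\ \ B]$.

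I expect the argument to be short, and the only point requiring care is the reduction just described: the positive definiteness of $R$ does the heavy lifting by removing any contribution from $u$, after which the obstruction is purely about whether $1$ is an observable eigenvalue of $(A,C)$. In fact the hypothesis $|\gamma|\neq1$ is invoked only through the single instance $\gamma=1$, so the main obstacle here is conceptual rather than computational---identifying that the slice $u=0$ of $\ker[A-I\ \ B]$ is precisely the eigenspace of $A$ for the eigenvalue $1$, and then linking this cleanly to the notion of unobservability in Definition~\ref{def:unobs}.
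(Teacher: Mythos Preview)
Your proof is correct and follows essentially the same route as the paper's own argument: write the quadratic form as $\|Cx\|^2+\|Ku\|^2$, use the invertibility of $K$ to force $u=0$, deduce that $x$ is a nonzero eigenvector of $A$ for the eigenvalue $1$ lying in $\ker C$, and conclude by contradiction with the hypothesis on unobservable eigenvalues. The paper's version is terser but structurally identical.
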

\begin{proof}
Obviously, the matrix $\sbmat{C^{T}C}{0}{0}{K^{T}K}$ is positive semi-definite. Suppose $(x,u)$ is a non-trivial element in $\ker[A-I\ B]$ satisfying
\begin{equation*}
\left\<\sbmat{C^{T}C}{0}{0}{K^{T}K}\bvec{x}{u},\bvec{x}{u}\right\>=\|Cx\|^2+\|Ku\|^2=0.
\end{equation*}
Since $K$ is invertible, we deduce that $u=0$, $Cx=0$, $x\neq 0$, and $x=Ax$. This means that $x$ is an unobservable eigenvalue of $A$ corresponding to eigenvalue $1$, which contradicts our assumption.
\end{proof}
\begin{prop}
Assume that each unobservable eigenvalue~$\gamma$ of the pair $(A,C)$ satisfies $|\gamma| \neq1$. Then the constrained optimal steady state problem
\begin{align}\label{conoss}
\begin{split}
&\mathop{\rm minimize}_{x\in\R^n,u\in\R^m}\;\;\ell(x,u)\\
&\text{s.t.}\;\;(x,u)\in S,\quad Ax+Bu=x
\end{split}
\end{align}
admits a unique minimizer $(x_e,u_e)$.
\end{prop}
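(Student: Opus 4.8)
The plan is to establish existence and uniqueness separately. The objective $\ell$ is a quadratic function with Hessian $\mathrm{diag}(2Q, 2R)$, which is positive semi-definite but not necessarily strictly convex on the whole space. The feasible set is the intersection of the closed convex set $S$ with the affine subspace $\{(x,u) : (A-I)x + Bu = 0\}$, hence closed and convex. **The key observation** that makes uniqueness work is Lemma \ref{posidefin}: on the kernel of $[A-I\ B]$, which is precisely the steady-state constraint subspace, the Hessian is positive \emph{definite}. So although $\ell$ is only convex globally, its restriction to the affine feasible set is \emph{strictly} convex, and a strictly convex function has at most one minimizer on a convex set.

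**First I would** address existence. I would like to invoke the standard result that a continuous, coercive function attains its minimum on a nonempty closed set, so the main task is coercivity of $\ell$ restricted to the feasible set. I would decompose an arbitrary feasible point relative to a fixed feasible reference point $(\bar x, \bar u)$: writing $(x,u) = (\bar x, \bar u) + (w_1, w_2)$ with $(w_1,w_2) \in \ker[A-I\ B]$, the quadratic part of $\ell$ along this direction is controlled below by $\alpha(\|w_1\|^2 + \|w_2\|^2)$ for some $\alpha > 0$, using Lemma \ref{posidefin} together with compactness of the unit sphere in the kernel subspace. The linear terms are then dominated, giving coercivity and hence existence of a minimizer on the closed feasible set.

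**The uniqueness step** then follows cleanly: suppose $(x_e, u_e)$ and $(x_e', u_e')$ are two minimizers. Their difference lies in $\ker[A-I\ B]$ since both satisfy the linear steady-state constraint. By convexity of $S$ the midpoint is feasible, and by the strict convexity of $\ell$ along kernel directions (Lemma \ref{posidefin}) the value at the midpoint is strictly smaller unless the two points coincide, contradicting optimality. **I would also need** to confirm the feasible set is nonempty; this must be assumed as a standing hypothesis (otherwise no steady state exists), and I expect the paper takes $(x_e,u_e) \in S$ as given in the dissipativity setup.

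**The main obstacle** I anticipate is the coercivity argument for existence, specifically handling the case where $S$ itself is unbounded. Lemma \ref{posidefin} only gives positive-definiteness \emph{on the kernel subspace}, so the growth of $\ell$ is guaranteed only in directions tangent to the steady-state constraint — which is exactly the set of feasible directions, so this is in fact sufficient, but the decomposition must be set up carefully to exploit that the feasible set lies in an affine translate of this kernel. The linear terms $2z^Tx + 2v^Tu$ could in principle spoil coercivity if $Q$ and $R$ degenerate, but the positive-definiteness on the kernel rules this out, since a quadratic-plus-linear function that is strictly convex in the relevant directions is coercive there.
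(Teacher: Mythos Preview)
Your proposal is correct and follows essentially the same approach as the paper: both use Lemma~\ref{posidefin} to get coercivity of $\ell$ on the affine steady-state set $\ker[A-I\ B]$ for existence, and the midpoint argument together with strict convexity on that kernel for uniqueness. Your write-up is in fact slightly more careful than the paper's, which does not explicitly mention the nonemptiness of the feasible set and treats the intersection with $S$ only implicitly.
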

\begin{proof}
By Lemma \ref{posidefin}, the quadratic term of $\ell$ is positive definite on $\ker[A-I\ B]$, so $\ell$ is bounded from below on $\ker[A-I\ B]$. Besides, $\ell$ is continuous in $(x,u)$ and $\ell(x,u)\to\infty$ as $\|x\|+\|u\|\to\infty$ on $\ker[A-I\ B]$. So, the minimizer of \eqref{conoss} exists on $\ker[A-I\ B]$. Now assume that $(x_1,u_1)$ and $(x_2,u_2)$ are two distinct minimizers of \eqref{conoss}. Since $S$ is a convex set, $\left(\frac{x_1+x_2}{2},\frac{u_1+u_2}{2}\right)\in S$. Simple calculations show that
\begin{align*}
&\frac{\ell(x_1,u_1)+\ell(x_2,u_2)}{2}-\ell\left(\tfrac{x_1+x_2}{2},\tfrac{u_1+u_2}{2}\right)\\
&\qquad=\frac{1}{4}\left(\|C(x_1-x_2)\|^2+\|K(u_1-u_2)\|^2\right).
\end{align*}
Hence we conclude that $\ell\left(\frac{x_1+x_2}{2},\frac{u_1+u_2}{2}\right)$ attains a strictly lower value than $\ell(x_1,u_1)=\ell(x_2,u_2)$, which leads to a contradiction.
\end{proof}
We refer to the unique minimizer $(x_e,u_e)$ of~\eqref{conoss} as the \emph{constrained optimal steady state} of the constrained dynamical OCP~\eqref{cost}-\eqref{dynamic}, or simply as the optimal steady state for the unconstrained case (i.e., $S=\R^n\times\R^m$). In the sequel of this paper, we assume that the KKT conditions are verified at $(x_e,u_e)$. That is, we assume that there exist some $\lambda\in \R^n$ and $\mu\in\R$ such that
\begin{equation}\label{KKT}
\!\!\!\left\{
\begin{alignedat}{2}
&\bvec{2C^TCx_e+2z}{2K^TKu_e+2v}+\bvec{A^T-I}{B^T}\lambda+(\partial g(x_e,u_e))^T\mu\ni 0,\\
&Ax_e+Bu_e=x_e,\\
&\mu\geq 0,\\
&g(x_e,u_e)\mu=0,
\end{alignedat}
\right.
\end{equation}
where $\partial g(x,u)$ denotes the subdifferential of $g$ at $(x,u)$. In general, the KKT conditions can be guaranteed by very loose regularity conditions, e.g., Slater's condition, which requires that the feasible region $S$ contains a steady state in its interior. Here we refer to \cite[Chapter 4]{Lange} for a detailed discussion on KKT conditions and constraint qualifications.

\section{Main results}\label{mainresults}
\begin{thm}\label{dissres}
Assume that each nonobservable eigenvalue satisfies $|\mu| \neq1$, and that the KKT conditions are verified at the constrained optimal steady state $(x_e,u_e)$. Then the constrained OCP~\eqref{cost}-\eqref{dynamic} is strictly pre-dissipative at  $(x_e,u_e)$. In addition, if the pair $(A,C)$ is detectable, then the problem~\eqref{cost}-\eqref{dynamic} is strictly dissipative at $(x_e,u_e)$.
\end{thm}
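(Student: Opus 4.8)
The plan is to exploit the KKT conditions~\eqref{KKT} to construct an explicit \emph{linear} storage function and reduce the strict pre-dissipativity inequality to the positive-definiteness statement of Lemma~\ref{posidefin}. First I would try the ansatz $V(x)=\lambda^T x$, where $\lambda\in\R^n$ is the multiplier from~\eqref{KKT}; this is the standard choice for LQ problems and is trivially bounded on bounded sets. Plugging this into the required inequality, the term $V(Ax+Bu)-V(x)=\lambda^T(Ax+Bu-x)=\lambda^T\bigl((A-I)x+Bu\bigr)$ is affine in $(x,u)$, while $\ell(x,u)-\ell(x_e,u_e)$ expands into a quadratic part plus an affine part. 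The aim is to show that the quadratic part dominates, i.e. that
\begin{align*}
\ell(x,u)-\ell(x_e,u_e)-\lambda^T\bigl((A-I)x+Bu\bigr)
\end{align*}
is bounded below by $\alpha(\|(x-x_e,u-u_e)\|_{\mathrm{diss}})$ for all $(x,u)\in S$, for a suitable $\alpha\in\calK$ and norm $\|\cdot\|_{\mathrm{diss}}$.

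The second step is the algebraic heart of the argument. Writing $\xi=x-x_e$, $\eta=u-u_e$, the quadratic terms of $\ell$ contribute $\xi^TC^TC\xi+\eta^TK^TK\eta$, and the linear-plus-cross terms should, upon using the stationarity (first) line of~\eqref{KKT}, collapse into the contribution of the subgradient/multiplier pair. Concretely, the gradient condition says the linear functional $2C^TCx_e+2z$ (and its control counterpart) equals $-(A-I)^T\lambda-B^T\lambda$-type terms minus $\mu\,\partial g(x_e,u_e)$; substituting this in makes the affine remainder equal to $-\mu\,s^T(x-x_e,u-u_e)$ for some subgradient $s\in\partial g(x_e,u_e)$. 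Here complementary slackness $\mu\,g(x_e,u_e)=0$ and convexity of $g$ enter: by the subgradient inequality, $s^T\bigl((x,u)-(x_e,u_e)\bigr)\le g(x,u)-g(x_e,u_e)\le 0$ for every $(x,u)\in S$ (since $g\le 0$ on $S$, and $g(x_e,u_e)=0$ whenever $\mu>0$). Because $\mu\ge 0$, the term $-\mu\,s^T(\cdots)\ge 0$, so it can be discarded to the good side of the inequality.

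What remains is precisely the quadratic form $\|C(x-x_e)\|^2+\|K(u-u_e)\|^2$, and I would control it using Lemma~\ref{posidefin}. The difficulty is that the lemma gives positive-definiteness only on $\ker[A-I\ B]$, whereas the dissipation inequality must hold on all of $S$. The resolution is to choose the \emph{dissipativity norm} adapted to the quadratic form: take $\|(\xi,\eta)\|_{\mathrm{diss}}^2:=\|C\xi\|^2+\|K\eta\|^2$, which by Lemma~\ref{posidefin} is a genuine norm on the relevant subspace; then $\alpha(r)=r^2$ works verbatim on that subspace, and one extends to all of $S$ by noting the storage construction only requires the inequality to certify the quadratic lower bound along the admissible trajectory differences. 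I expect this mismatch between the subspace on which positivity is guaranteed and the full constraint set $S$ to be the main obstacle, and I anticipate it is handled exactly by defining $\|\cdot\|_{\mathrm{diss}}$ through the (possibly degenerate) form $C,K$ rather than the Euclidean norm, so that no positivity off $\ker[A-I\ B]$ is ever needed.

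Finally, for the strict dissipativity upgrade under detectability of $(A,C)$, the plan is to modify the storage function by adding a quadratic correction, replacing $V(x)=\lambda^T x$ with $V(x)=\lambda^T x+(x-x_e)^TP(x-x_e)$ for a suitable positive semidefinite $P$ solving (or satisfying an inequality version of) a Lyapunov/Riccati relation built from $(A,C)$. Detectability ensures such a $P\ge 0$ exists making the augmented storage function bounded below on $X$, while the extra quadratic terms it introduces into the dissipation inequality are absorbed into the dissipation rate $\alpha$; the pre-dissipativity inequality is preserved because the added term is itself dissipated along the stable modes. The boundedness-below of $V$ on $X$ then follows since the linear part $\lambda^T x$ is dominated by the quadratic growth of $(x-x_e)^TP(x-x_e)$ on the closed convex set $X$, completing the strict dissipativity claim.
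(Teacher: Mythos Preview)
Your handling of the KKT/subgradient part is correct and matches the paper: the complementary-slackness and convexity argument that discards the $\mu$-term is exactly right. The gap is earlier, in the choice of storage function and the resulting dissipation rate. With the purely linear storage $V(x)=\lambda^Tx$, the algebra you describe leaves precisely
\[
\ell(x,u)-\ell(x_e,u_e)-\lambda^T\bigl((A-I)x+Bu\bigr)\;\ge\;\|C(x-x_e)\|^2+\|K(u-u_e)\|^2
\]
on $S$. But $\|C\xi\|^2+\|K\eta\|^2$ is only a \emph{seminorm} on $\R^n\times\R^m$ whenever $\ker C\neq\{0\}$, which the standing hypothesis ``$Q\succeq 0$'' explicitly allows. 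The definition of strict pre-dissipativity requires a genuine norm $\|\cdot\|_{\mathrm{diss}}$ on the full space, and the downstream turnpike argument (Corollary~\ref{main2}) uses the dissipation rate to bound the \emph{Euclidean} distance $\|x-x_e\|^2+\|u-u_e\|^2$. Lemma~\ref{posidefin} cannot repair this: it certifies positivity only on $\ker[A-I\ B]$, whereas the dissipation inequality must hold for every $(x,u)\in S$, not only for steady states. Your proposed fix---declaring the degenerate form to be the dissipativity norm---therefore does not meet the definition, and the vague clause ``the storage construction only requires the inequality \ldots along admissible trajectory differences'' has no content here.

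What the paper does instead is to start with a \emph{quadratic} storage already in the pre-dissipativity step. Under the sole hypothesis that unobservable eigenvalues avoid the unit circle, an external LMI-type result (\cite[Theorem~6.1(ii)]{LarsGugdis}) furnishes a symmetric (possibly indefinite) $P$ and $s>0$ with
\[
(Ax+Bu)^TP(Ax+Bu)-x^TPx\;\le\;\|Cx\|^2+\|Ku\|^2-s\bigl(\|x\|^2+\|u\|^2\bigr)\quad\text{for all }(x,u)\in\R^{n+m}.
\]
This inequality is exactly what converts the degenerate bound $\|C\xi\|^2+\|K\eta\|^2$ into a genuine Euclidean bound $s(\|\xi\|^2+\|\eta\|^2)$. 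One then shifts by $(x_e,u_e)$ and adds a linear correction $w^Tx$ (with $w=-\lambda-2Px_e$) determined by the KKT multiplier, after which your subgradient argument goes through verbatim. The detectability upgrade is then immediate: under detectability the same external result gives $P\succ0$, so $V(x)=x^TPx+w^Tx$ is bounded below on all of $\R^n$. In short, the missing ingredient in your plan is this quadratic LMI, not Lemma~\ref{posidefin}; the latter is used only for existence/uniqueness of $(x_e,u_e)$, not for the dissipativity estimate.
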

\begin{proof}
Let $(x_e,u_e)$ be the constrained optimal steady state of problem \eqref{conoss}. Recall from \eqref{KKT} that there exist some multipliers $\lambda\in\R^n$ and $\mu\in\R$ such that
\begin{equation}\label{eq:pfthm}
\left\{
\begin{alignedat}{2}
&\bvec{2C^TCx_e+2z+(A^T-I)\lambda}{2K^TKu_e+2v+B^T\lambda}+\nu^T\mu=0,\\
&Ax_e+Bu_e=x_e,\\
&\mu\geq 0,\\
&g(x_e,u_e)\mu=0,
\end{alignedat}
\right.
\end{equation}
where $\nu\in\R^{1\times(m+n)}$ is some subgradient of $g$ at $(x_e,u_e)$.

By applying \cite[Theorem 6.1 (ii)]{LarsGugdis} to the case where $z=0$, $v=0$ and $R$ is replaced by $\frac{R}{2}$, there exists a symmetric $P\in\R^{n\times n}$ and $s>0$ such that for any $(x,u)\in\R^{n+m}$
\begin{align*}
&(Ax+Bu)^TP(Ax+Bu)-x^TPx\\
&\qquad\qquad\leq \|Cx\|^2+\|Ku\|^2-s(\|x\|^2+\|u\|^2),
\end{align*}
where the $\|u\|^2$ term here follows easily from the positive definiteness of $R$. After substituting $x,u$ with $x-x_e,u-u_e$, and some straightforward calculations, we have that, for any $x\in \R^n$ and $u\in\R^m$,
\begin{multline*}
(Ax+Bu)^TP(Ax+Bu)-2\<(A-I)x+Bu,Px_e\> +\\
-x^TPx \leq \ell(x,u)-\ell(x_e,u_e)-s(\|x-x_e\|^2+\|u-u_e\|^2)+ { }\\
-2\<x-x_e,C^TCx_e+z\>-2\<u-u_e,K^TKu_e+v\>.
\end{multline*}
Combining this inequality with the first identity in~\eqref{eq:pfthm}, we obtain that
\begin{align*}
&(Ax+Bu)^TP(Ax+Bu)-x^TPx + { } \\
&\qquad-\<(A-I)x+Bu,\lambda+2Px_e\>\\
&\leq \ell(x,u)-\ell(x_e,u_e)-s(\|x-x_e\|^2+\|u-u_e\|^2)+ { }\\
&\qquad+\left\<\nu^T\mu,\sbvec{x-x_e}{u-u_e}\right\>.
\end{align*}
Let $w=-\lambda-2Px_e$ and $V(x):=x^TPx+\<w,x\>$. Then
\begin{align}\label{precondis}
\begin{split}
&V(Ax+Bu)-V(x)\leq \ell(x,u)-\ell(x_e,u_e)+{}\\
&\qquad-s(\|x-x_e\|^2+\|u-u_e\|^2)+\mu\nu\sbvec{x-x_e}{u-u_e}
\end{split}
\end{align}
for any $(x,u)\in \R^{n+m}$. {From~\eqref{eq:pfthm} we know that $\nu$ is a subgradient of $g$ at $(x_e,u_e)$ and $\mu\geq0$}. Thus, we have that
\begin{equation*}
\mu\nu\bvec{x-x_e}{u-u_e}\leq \mu g(x,u)-\mu g(x_e,u_e).
\end{equation*}
Recall that $\mu g(x,u)\leq0$ if $(x,u)\in S$ and $\mu g(x_e,u_e)=0$ from the KKT conditions~\eqref{eq:pfthm}. So,
\begin{equation*}
\mu\nu\bvec{x-x_e}{u-u_e}\leq0,\;\;\forall (x,u)\in S.
\end{equation*}
Combining this with \eqref{precondis}, we obtain that
\begin{align*}
V(Ax+Bu)-V(x)\leq &\ \ell(x,u)-\ell(x_e,u_e)\\
&\quad-s(\|x-x_e\|^2+\|u-u_e\|^2),
\end{align*}
for any $(x,u)\in S$, which implies that problem $(CLQ)_T$ is strictly pre-dissipative at $(x_e,u_e)$ with storage function $V$.

Moreover, if the pair $(A,C)$ is detectable, then by \cite[Theorem 6.1 (i)]{LarsGugdis}, the matrix $P$ can be taken as positive definite. In this case, it turns out that the storage function $V$ is bounded from below, so the problem $(CLQ)_T$ is strictly dissipative at~$(x_e,u_e)$.
\end{proof}

\begin{rem}
{In fact, it was proved in \cite[Theorem 8.1]{LarsGugdis} (and \cite[Theorem 8.4]{LarsGugcont} for the continuous-time counterpart) that for unconstrained LQ OCP, the detectability of pair $(A,C)$ is also a necessary condition for strict dissipativity. This equivalence result can even be generalized to certain infinite dimensional problems, see, e.g.,~\cite{IFAC}.}
\end{rem}
\begin{rem}
{For a properly selected $g$, the last condition
\begin{equation*}
g(x_e,u_e)\mu=0
\end{equation*}
in ~\eqref{eq:pfthm} characterizes the nature of the turnpike reference. If we have $g(x_e,u_e)< 0$ and $\mu= 0$, then $(x_e,u_e)$ is an interior point of $S$, so $(x_e,u_e)$ is a local minimum of $\ell$ in $\R^n\times\R^m$. By the convexity of $\ell$, in fact $(x_e,u_e)$ is a global minimum of $\ell$ in $\R^n\times\R^m$. On the other hand, if $g(x_e,u_e)=0$ and $\mu\geq0$, then $(x_e,u_e)$ lies on the boundary of $S$.}
\end{rem}

{We shall use the following stabilizability assumption.
\begin{assumption}\label{assumptionA}
For any $x_0\in X_{\mathrm{tp}}\subset X$, there exists a control $u\in \calA^\infty(x_0)$ such that the corresponding trajectory~$x$ satisfies
\begin{equation}\label{traexp}
\|x(i)-x_e\|\leq M_0e^{-\rho i},\quad \forall i\in\N,
\end{equation}
where $(x_e,u_e)$ is the constrained optimal steady state and $M_0,\rho>0$ are constants independent of the selection of $x_0$.
\end{assumption}
}
\begin{lem}\label{lemexp}
Let Assumption \ref{assumptionA} be verified, then there exists some constant $M>0$ such that
\begin{equation}\label{keyestimate}
\sum_{i=0}^N \ell(x_N^*(i,x_0),u_N^*(i,x_0))-\ell(x_e,u_e)\leq M,
\end{equation}
for any $x_0\in X_{\mathrm{tp}}$ and $N\in\N^+$.
\end{lem}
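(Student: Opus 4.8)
The plan is a standard reachability-cost argument: bound the optimal accumulated cost from above by the cost of the stabilizing comparison control supplied by Assumption~\ref{assumptionA}, and then show that this cost, measured against the steady-state value $\ell(x_e,u_e)$, forms a convergent series whose sum is independent of the horizon. First I would fix $x_0\in X_{\mathrm{tp}}$ and take $u\in\calA^\infty(x_0)$ with trajectory $x$ satisfying \eqref{traexp}. Restricting $u$ to $\{0,\ldots,N-1\}$ yields an element of $\calA^N(x_0)$, since every state of the infinite-horizon trajectory stays in $X$; optimality of $(x_N^*,u_N^*)$ then gives
\begin{equation*}
\sum_{i=0}^{N-1}\ell(x_N^*(i,x_0),u_N^*(i,x_0))\le\sum_{i=0}^{N-1}\ell(x(i),u(i)).
\end{equation*}
Subtracting the steady-state cost, \eqref{keyestimate} reduces to the horizon-uniform bound $\sum_{i=0}^{\infty}\big(\ell(x(i),u(i))-\ell(x_e,u_e)\big)\le M$ (the minor discrepancy between the summation range in \eqref{keyestimate} and that of the running cost \eqref{cost} is immaterial and absorbed into $M$).

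Next I would estimate the stage-cost deviation directly from the quadratic form \eqref{runcost}. Writing $\delta x(i)=x(i)-x_e$ and $\delta u(i)=u(i)-u_e$,
\begin{align*}
\ell(x(i),u(i))-\ell(x_e,u_e)=&\ \delta x(i)^TQ\,\delta x(i)+\delta u(i)^TR\,\delta u(i)\\
&+2(Qx_e+z)^T\delta x(i)+2(Ru_e+v)^T\delta u(i),
\end{align*}
so that $|\ell(x(i),u(i))-\ell(x_e,u_e)|\le C_1(\|\delta x(i)\|^2+\|\delta u(i)\|^2)+C_2(\|\delta x(i)\|+\|\delta u(i)\|)$ with fixed constants $C_1,C_2$. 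By \eqref{traexp} the state contributions are dominated by $Ce^{-\rho i}$, so the whole series would be bounded by a geometric one, \emph{provided} $\|\delta u(i)\|$ also decays exponentially, uniformly in $x_0$. I would deliberately avoid using the KKT identities \eqref{eq:pfthm} here, since substituting them introduces the term $-\mu\,\nu\sbvec{\delta x(i)}{\delta u(i)}$, which is nonnegative on $S$ and so works against an upper bound; the direct quadratic estimate above is the right tool for this direction.

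The crux is therefore to control $\delta u$, since Assumption~\ref{assumptionA} restricts only the state. From \eqref{dynamic} together with $Ax_e+Bu_e=x_e$ one has $B\,\delta u(i)=\delta x(i+1)-A\,\delta x(i)$, whose norm is at most $M_0(e^{-\rho}+\|A\|)e^{-\rho i}$ by \eqref{traexp}. When $B$ is injective this gives $\|\delta u(i)\|\le\|B^{+}\|\,\|B\,\delta u(i)\|\le Ce^{-\rho i}$ at once. In the general case I would pass to the minimum-norm control $u_e+B^{+}\big(\delta x(i+1)-A\,\delta x(i)\big)$, which reproduces the same exponentially decaying trajectory and satisfies the same exponential bound on $\|\delta u(i)\|$; the delicate point---and precisely where the boundary reference enters---is to verify that this modified control remains admissible, i.e.\ stays in $S$, which I would argue from the convexity of $S$ and the proximity of $(x(i),u(i))$ to $(x_e,u_e)$. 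Once exponential decay of both $\delta x$ and $\delta u$ is secured, the per-step bound becomes $|\ell(x(i),u(i))-\ell(x_e,u_e)|\le C'e^{-\rho i}$, and summation of the geometric series yields the desired constant $M=C'/(1-e^{-\rho})$, uniform in $x_0$ and $N$. The main obstacle is thus this admissibility-preserving control decay; everything else is a routine geometric estimate.
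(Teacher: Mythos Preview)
Your approach coincides with the paper's. Both bound the optimal cost by that of the stabilizing control from Assumption~\ref{assumptionA}, replace this control by a minimum-norm representative generating the same trajectory, derive exponential decay of $\|\delta u(i)\|$ from injectivity of $B$ on $(\ker B)^\perp$, expand $\ell$ quadratically exactly as you do, and sum the resulting geometric series. Your modified control $u_e+B^{+}\big(\delta x(i{+}1)-A\,\delta x(i)\big)$ is precisely the paper's $\wt u:=\mathbb{P}_{\Omega^\perp}u+\mathbb{P}_{\Omega}u_e$ with $\Omega=\ker B$, since $B^{+}B=\mathbb{P}_{\Omega^\perp}$.

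The admissibility of this modified control, which you rightly flag as the crux, is not verified in the paper's argument either: the paper simply proceeds with $\wt u$ without checking $(x(i),\wt u(i))\in S$. Your proposed remedy via convexity and proximity to $(x_e,u_e)$ would not close the gap in general---proximity fails at early steps, and near a boundary reference it does not guarantee membership in $S$ even asymptotically---so on this point you are at the same level of rigor as the paper, only more explicit about the difficulty.
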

\begin{proof}
{Let assumption~\ref{assumptionA} be satisfied with respect to $X_{\mathrm{tp}}\subset X$.} Fix some initial condition $x_0\in X_{\mathrm{tp}}$ and a control $u\in \calA^\infty(x_0)$ such that the corresponding trajectory $x(\cdot)$ satisfies \eqref{traexp}. Let $\mathbb{P}_M$ denote the projection operator to some subspace $M$. Define $\Omega:=\ker B$ and $\wt u:=\mathbb{P}_{\Omega^\perp}u+\mathbb{P}_{\Omega}u_e$. Since $B\mathbb{P}_{\Omega}u_e=0$, we have
\begin{align*}
x(i+1)&=Ax(i)+Bu(i)\\
&=Ax(i)+ B\wt u(i),\quad\forall i\in\N.
\end{align*}
We thus conclude that the trajectory corresponding to the initial condition $x_0$ and $\wt u$ coincides with $x$. Meanwhile, since $B$ is injective on $\Omega^\perp$, there exists some $m_0>0$ such that
\begin{equation*}
\|Bw\|\geq m_0\|w\|,\;\;\forall w\in\Omega^\perp.
\end{equation*}
Since $\wt u(i)-u_e=\mathbb{P}_{\Omega^\perp}(u(i)-u_e)\in\Omega^\perp$ for all $i\in\N$, we obtain that
\begin{align*}
\|\wt u(i)-u_e\|&\leq\frac{1}{m_0}\|B(\wt u(i)-u_e)\|\\
&=\frac{1}{m_0}\|x(i+1)-Ax(i)-x_e+Ax_e\|\\
&\leq\frac{1}{m_0}(\|x(i+1)-x_e\|+\|A\|\|x(i)-x_e\|),
\end{align*}
for any $i\in\N$. From \eqref{traexp}, we easily deduce that
\begin{equation}\label{conexp}
\!\|\wt u(i)-u_e\|\leq\frac{M_0e^{-\rho(i+1)}+\|A\|M_0e^{-\rho i}}{m_0}\leq M_1e^{-\rho i},
\end{equation}
for any $i\in\N$, for a suitable $M_1 > 0$.

On the other hand, for any $(x,u)\in\R^{n+m}$, simple calculations show that
\begin{align*}
\ell(x,u)-\ell(x_e,u_e)=&\<C^TCx+C^TCx_e+2z,x-x_e\>\\
&+\<K^TKu+K^TKu_e+2v,u-u_e\>,
\end{align*}
and thus
\begin{align}\label{costest}
\begin{split}
\ell(x,u)-&\ell(x_e,u_e)\\
\leq&\|C^TCx+C^TCx_e+2z\|\|x-x_e\|\\
&+\|K^TKu+K^TKu_e+2v\|\|u-u_e\|.
\end{split}
\end{align}
Combining \eqref{traexp} and \eqref{conexp}, we deduce that
\begin{equation*}
\|x(i)\|\leq \|x_e\|+M_0\;\;\text{and}\;\;\|u(i)\|\leq \|u_e\|+M_1,\;\forall i\in\N.
\end{equation*}
From \eqref{traexp} and \eqref{costest}, we thus conclude that
\begin{equation*}
\ell(x(i),u(i))-\ell(x_e,u_e)\leq M_2e^{-\rho i},\;\;\forall i\in\N,
\end{equation*}
where $M_2>0$ depends on $M_0$ and $M_1$, and
\begin{multline*}
\sum_{i=0}^N \ell(x_N^*(i,x_0),u_N^*(i,x_0))-\ell(x_e,u_e)\\
\leq \sum_{i=1}^N \ell(x(i),u(i))-\ell(x_e,u_e)\leq \frac{M_2}{1-e^{-\rho}},\quad \forall N\in\N.
\end{multline*}

Since the constants $M_0$ and $M_1$ do not depend on the selection of $x_0\in X_{\mathrm{tp}}$, neither does $M_2$. We thus conclude the estimate of the lemma with constant $M:=\frac{M_2}{1-e^{-\rho}}$.
\end{proof}

\begin{cor}\label{main2}
{Assume that the KKT conditions~\eqref{KKT} are satisfied at the constrained optimal steady state $(x_e,u_e)$. If Assumption \ref{assumptionA} and one of the following conditions hold:}
\begin{enumerate}
    \item [(a)] $X$ is bounded and, for any unobservable eigenvalue $\gamma$ of the pair $(A,C)$, we have that $|\gamma|\neq 1$.
    \item [(b)] The pair $(A,C)$ is detectable,
\end{enumerate}
then the measure turnpike property is satisfied at $(x_e,u_e)$.
\end{cor}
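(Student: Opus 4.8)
The plan is to read off the measure turnpike property from strict (pre-)dissipativity by telescoping the dissipation inequality along the optimal trajectory and then controlling the boundary terms with the uniform cost estimate of Lemma~\ref{lemexp}. Since the KKT conditions hold by hypothesis, I would first check that Theorem~\ref{dissres} applies in both cases. In case~(a) the assumption that every unobservable eigenvalue $\gamma$ satisfies $|\gamma|\neq1$ directly yields strict pre-dissipativity. In case~(b) detectability of $(A,C)$ forces every unobservable eigenvalue to satisfy $|\gamma|<1$, hence $|\gamma|\neq1$, so Theorem~\ref{dissres} applies and yields the stronger conclusion of strict dissipativity. In either case we obtain a storage function $V$, bounded on bounded subsets of $\R^n$, and a constant $s>0$ such that
\[
V(Ax+Bu)-V(x)\leq \ell(x,u)-\ell(x_e,u_e)-s\big(\|x-x_e\|^2+\|u-u_e\|^2\big)
\]
for all $(x,u)\in S$.

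Next I would evaluate this inequality at the optimal pair $(x^*_N(i,x_0),u^*_N(i,x_0))$, which lies in $S$ for $i=0,\ldots,N-1$ by admissibility, and sum over $i$. Since $x^*_N(i+1,x_0)=Ax^*_N(i,x_0)+Bu^*_N(i,x_0)$, the left-hand side telescopes to $V(x^*_N(N,x_0))-V(x_0)$. Bounding the excess cost sum $\sum_{i=0}^{N-1}\big(\ell(x^*_N(i,x_0),u^*_N(i,x_0))-\ell(x_e,u_e)\big)$ by the uniform constant $M$ of Lemma~\ref{lemexp}, rearrangement yields
\[
s\sum_{i=0}^{N-1}\big(\|x^*_N(i,x_0)-x_e\|^2+\|u^*_N(i,x_0)-u_e\|^2\big)\leq M+V(x_0)-V(x^*_N(N,x_0)).
\]

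The main obstacle is to bound the boundary term $V(x_0)-V(x^*_N(N,x_0))$ uniformly in $x_0\in X_{\mathrm{tp}}$ and $N$, and this is where the two hypotheses diverge. The key preliminary observation is that Assumption~\ref{assumptionA}, evaluated at $i=0$, gives $\|x_0-x_e\|\leq M_0$ for every $x_0\in X_{\mathrm{tp}}$; hence $X_{\mathrm{tp}}$ is bounded and, as $V$ is bounded on bounded sets, $V(x_0)$ is uniformly bounded above over $X_{\mathrm{tp}}$ in both cases. For the term $-V(x^*_N(N,x_0))$ I would argue separately: in case~(a), $X$ is bounded and the admissible terminal state satisfies $x^*_N(N,x_0)\in X$, so $V$ is bounded on all of $X$ and this term is uniformly controlled; in case~(b), strict dissipativity guarantees that $V$ is bounded from below on $X$, so $-V(x^*_N(N,x_0))$ is again uniformly bounded above. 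In both cases the right-hand side above is therefore dominated by a single constant $C>0$ independent of $x_0\in X_{\mathrm{tp}}$ and $N$.

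Finally I would close with a counting argument. Fix $\eps>0$ and set $I_\eps:=\big\{i\in\{0,\ldots,N-1\}\mid \|x^*_N(i,x_0)-x_e\|^2+\|u^*_N(i,x_0)-u_e\|^2>\eps\big\}$. Each index in $I_\eps$ contributes more than $s\eps$ to the left-hand side of the last display, so $s\eps\,\#I_\eps\leq C$, whence $\#I_\eps\leq C/(s\eps)$; choosing $N_\eps:=\lceil C/(s\eps)\rceil$ gives an integer bound depending only on $\eps$ through the uniform constants $s$ and $C$. This is precisely the measure turnpike property of Definition~\ref{def:turnpike} at $(x_e,u_e)$ on $X_{\mathrm{tp}}$.
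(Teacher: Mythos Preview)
Your proposal is correct and follows essentially the same route as the paper: invoke Theorem~\ref{dissres} for the dissipation inequality, telescope along the optimal trajectory, bound the cost sum via Lemma~\ref{lemexp}, control the boundary terms of $V$, and finish with the counting argument. If anything, you are more careful than the paper in two places: you explicitly check that detectability in case~(b) forces $|\gamma|\neq1$ so that Theorem~\ref{dissres} applies, and you spell out why $X_{\mathrm{tp}}$ is bounded (from Assumption~\ref{assumptionA} at $i=0$) and how the terminal term $-V(x^*_N(N,x_0))$ is handled separately in cases~(a) and~(b); the paper glosses over these points.
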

\begin{proof}
By Lemma \ref{lemexp}, there exists $M>0$ such that
\begin{equation*}
\sum_{i=0}^N \ell(x_N^*(i,x_0),u_N^*(i,x_0))-\ell(x_e,u_e)\leq M,
\end{equation*}
for any $x_0\in X_{\mathrm{tp}}$ and $N\in\N$. Besides, by Theorem \ref{dissres}, the problem~\eqref{cost}-\eqref{dynamic} is strictly dissipative at $(x_e,u_e)$, and there exist some storage function $V$ and sufficiently small number $s>0$ such that
\begin{multline*}
V(x_N^*(i+1,x_0))-V(x_N^*(i,x_0))\\
+s(\|x_N^*(i,x_0)-x_e\|^2+\|u_N^*(i,x_0)-u_e\|^2)\\
\leq \ell(x_N^*(i,x_0),u_N^*(i,x_0))-\ell(x_e,u_e),
\end{multline*}
where $x_0\in X_{\mathrm{tp}}$, $N\in\N^+$ and $i=0,\ldots,N-1$.
In particular, if our condition $(b)$ is satisfied, then $V$ is bounded from below. Summing on $i=0,\ldots,N-1$ and using the previous estimate, we easily obtain
\begin{multline*}
\displaystyle\Sigma_{i=0}^{N-1} s(\|x_N^*(i,x_0)-x_e\|^2+\|u_N^*(i,x_0)-u_e\|^2) +\\
V(x_N^*(N,x_0))-V(x_0) \leq M.
\end{multline*}
We notice that the term $V(x_N^*(N,x_0))-V(x_0)$ is uniformly bounded with respect to $x_0\in X_{\mathrm{tp}}$ and $N\in\N^+$. Consequently, there exists a constant $M_E$ such that for any $x_0\in X_{\mathrm{tp}}$ and $N\in\N^+$,
\begin{equation*}
\Sigma_{i=0}^{N-1} \|x_N^*(i,x_0)-x_e\|^2+\|u_N^*(i,x_0)-u_e\|^2\leq M_E.
\end{equation*}
For any $\eps>0$, by considering only the terms larger than $\eps$, we notice that, for any $x_0\in X_{\mathrm{tp}}$ and $N\in\N^+$,
\begin{equation*}
\#_{x_0,N}\eps\leq \Sigma_{i=0}^{N-1} \|x_N^*(i,x_0)-x_e\|^2+\|u_N^*(i,x_0)-u_e\|^2,
\end{equation*}
where we have abbreviated the number
\begin{align*}
\#\{ i = 0,\ldots,N-1 \; | \; \| x^*_N&(i,x_0)-x_e\|^2\\
&+\| u^*_N(i,x_0)-u_e\|^2 >\eps\}
\end{align*}
as $\#_{x_0,N}$. Combining the two previous estimates, we get
\begin{equation*}
\#_{x_0,N}\leq \frac{M_E}{\eps},\quad\forall x_0\in X_{\mathrm{tp}},\;N\in\N^+.
\end{equation*}
Hence we conclude that problem~\eqref{cost}-\eqref{dynamic} satisfies the measure turnpike property on the initial set $X_{\mathrm{tp}}$ at $(x_e,u_e)$. 
\end{proof}
{We now illustrate how we can use our results to ensure the turnpike property in some applications. The first example shows that the measure turnpike property may only be satisfied on certain initial set $X_{\mathrm{tp}}$. 
\begin{example}
Let $S$ and $X$ be defined by
\begin{equation*}
S:=\{(x,u)\in\R^{2+2}\,|\,\|x\|_{\infty}\leq 1,\,\|u\|_{\infty}\leq 0.1\},
\end{equation*}
$$
X=\{x\in\R^{2}\,|\,\|x\|_{\infty}\leq 1\}.
$$ 
In this case, $g(x,u)=\max\{\|x\|_{\infty} -1,\|u\|_{\infty} -0.1\}$ for all $(x,u)\in\R^{2+2}$. Let 
\begin{align*}
&A=e^{-0.1}\sbmat{\cos\frac{\pi}{4}}{\sin\frac{\pi}{4}}{-\sin\frac{\pi}{4}}{\cos\frac{\pi}{4}},\quad B=\sbmat{1}{0}{0}{1},\\
&C=\sbmat{0}{0}{0}{0},\quad K=\sbmat{1}{0}{0}{1},\quad z=v=\sbvec{0}{0}.
\end{align*}
Clearly, the optimal steady state is $(x_e,u_e)=(0,0)\in\R^{2+2}$. We claim that the points sufficiently close to the vertices of the square $X$ cannot be stabilized to $x_e=0$. In fact, if we take $x_0\approx[1\; 1]^T$ and any admissible $u(0)$, then
\begin{equation*}
x(1)\approx e^{-0.1}\sbvec{\cos\frac{\pi}{4}+\sin\frac{\pi}{4}}{-\sin\frac{\pi}{4}+\cos\frac{\pi}{4}}+u(0)=\sbvec{e^{-0.1}\sqrt{2}}{0}+u(0).
\end{equation*}
So, $\|x(1)\|_{\infty}\approx e^{-0.1}\sqrt{2}+(u(0))_1\geq 1.18$, which implies $x(1)\notin X$. 

However, if we let $X_{\mathrm{tp}}:=\{x\in\R^2\,|\,\|x\|\leq1\}$, simple calculations show that the pair $(x^*,u^*)$ (which is actually the optimal pair) given by
\begin{align*}
\begin{split}
&x^*(\tau)=e^{-0.1 \tau}\sbmat{\cos\frac{\pi \tau}{4}}{\sin\frac{\pi \tau}{4}}{-\sin\frac{\pi \tau}{4}}{\cos\frac{\pi \tau}{4}}x_0,\\
&u^*(\tau)\equiv0,\;\;\;\forall \tau\in\N,
\end{split}
\end{align*}
with $x_0\in X_{\mathrm{tp}}$ satisfies that
\begin{equation*}
\|x^*(\tau)\|=e^{-0.1 \tau}\|x_0\|\leq 1,\quad\forall \tau\in\N,
\end{equation*}
so we must have $(x^*(\tau),u^*(\tau))\in S$ for all $\tau\in\N$, which verifies our Assumption \ref{assumptionA}. Besides, since {$(x_e,u_e)$} is an interior steady state in $S$, so Slater's condition is satisfied and KKT conditions~\eqref{KKT} are verified at $(x_e,u_e)$. Finally, we can easily verify that the pair $(A,C)$ is detectable (see, e.g., \cite[Lemma 7.1.2]{Sontag}). Thus, by Corollary \ref{main2}, our problem satisfies the measure turnpike property on the initial set $X_{\mathrm{tp}}$.
\end{example}
\begin{rem}\label{rm:afterEx}
In \cite{LarsGugdis}, it was shown that for unconstrained problems the measure turnpike property is satisfied (on arbitrary compact set) if and only if the pair $(A,B)$ is stabilizable and the pair $(A,C)$ is detectable. The above example shows that a similar characterization of the measure turnpike (on certain initial set) solely based on spectral conditions like stabilizability and detectability as in the unconstrained case cannot hold for constrained problems, because the turnpike may depend on the selection of the initial set $X_{\mathrm{tp}}$.
\end{rem}}
The next example illustrates the occurrence of the measure turnpike at a boundary turnpike reference.
\begin{example}
Let $S$ be defined by
\begin{equation*}
S:=\left\{(x,u)\in\R^{3+1}\,|\,x_3\geq \sqrt{x_1^2+x_2^2},\;u\in[-1,1]\right\}.
\end{equation*}
Notice that $S$ is described as the sublevel set of the function $g(x,u)=\max\{\sqrt{x_1^2+x_2^2}-x_3,u^2-u\}$, for all $(x,u)\in\R^{3+1}$, and
$$
X=\left\{x\in\R^{3}\,|\,x_3\geq \sqrt{x_1^2+x_2^2}\right\}.
$$
In fact, the set $X$ is a cone with vertex at $(0,0,0)$.
Let us now fix $X_{\mathrm{tp}}$ to be an arbitrary bounded subset of $X$ and
\begin{align*}
&A=\left[\begin{smallmatrix}
e^{-0.1}\cos\frac{\pi}{4}&e^{-0.1}\sin\frac{\pi}{4}&0\\-e^{-0.1}\sin\frac{\pi}{4}&e^{-0.1}\cos\frac{\pi}{4}&0\\0&0&1
\end{smallmatrix}\right],\quad B=\left[\begin{smallmatrix}
0\\0\\-1
\end{smallmatrix}\right]\\
&C=\left[\begin{smallmatrix}
0&0&0\\
0&0&0\\
0&0&1
\end{smallmatrix}\right],\quad
K=1.
\end{align*}
Besides, set $z=[0\ 0\ 1]^T$ and $v=1$. 
Simple calculations show that, for any $(x,u)\in\R^{3+1}$,
\begin{equation*}
\ell(x,u):=(x_3+1)^2+(u+1)^2-2.
\end{equation*}
So, the global optimal steady state in $\R^{3+1}$ is given by
\begin{equation*}
(x_{\mathrm{g}},u_{\mathrm{g}})=\left(\left[
\begin{smallmatrix}
0\\0\\-1
\end{smallmatrix}\right], 0\right).
\end{equation*}
As we can see, this steady state (which does not belong to $S$) is different from the constrained optimal steady state
\begin{equation*}
(x_e,u_e)=\left(\left[
\begin{smallmatrix}
0\\0\\0
\end{smallmatrix}\right],0
\right).
\end{equation*}
{Moreover, since the additional steady state
\begin{equation*}
(x_{\iso},u_{\iso})=\left(\left[
\begin{smallmatrix}
0\\0\\1
\end{smallmatrix}\right],0
\right).
\end{equation*}
is in the interior of $S$, Slater's condition is satisfied and thus the KKT conditions~\eqref{KKT} are verified at $(x_e,u_e)$.} On the other hand, it is easy to check that the pair $(A,C)$ is detectable. So, to apply Corollary \ref{main2}, we only need to verify the exponential stablizability of the initial set $X_{\mathrm{tp}}$ to $(x_e,u_e)$ in~\eqref{traexp}. Indeed, we claim that the feedback control defined by
\begin{equation}\label{law}
u(x)=\min\left\{1,x_3-e^{-0.1}\sqrt{x_1^2+x_2^2}\right\},\quad\forall x\in\R^3,
\end{equation}
stabilizes $X_{\mathrm{tp}}$ to $(x_e,u_e)$ at a uniform exponential rate. First, notice that $u(x)\in[0,1]$ if $x\in X$. Now assume that, for some $N\in\N$, the trajectory corresponding to initial condition $x_0\in X_{\mathrm{tp}}$ and control \eqref{law} satisfies $(x(i),u(i))\in S$, for any $i=0,\ldots,N$. Then
\begin{align*}
\begin{split}
x(N+1)&=Ax(N)+Bu(N)\\
&=\left[
\begin{smallmatrix}
e^{-0.1}\cos\frac{\pi}{4}x_1(N)+e^{-0.1}\sin\frac{\pi}{4}x_2(N)\\-e^{-0.1}\sin\frac{\pi}{4}x_1(N)+e^{-0.1}\cos\frac{\pi}{4}x_2(N)\\x_3(N)-u(x(N)).
\end{smallmatrix}\right].
\end{split}
\end{align*}
Observe that
\begin{equation}\label{stablepart}
\!\!\!\!\!\sqrt{x_1^2(N+1)+x_2^2(N+1)}=e^{-0.1}\sqrt{x_1^2(N)+x_2^2(N)}\,.
\end{equation}
So,
\begin{multline*}
x_3(N+1)-\sqrt{x_1^2(N+1)+x_2^2(N+1)}\\
=x_3(N)-e^{-0.1}\sqrt{x_1^2(N)+x_2^2(N)}-u(x(N))\geq 0,
\end{multline*}
and thus $(x(N+1),u(x(N+1)))\in S$. By induction, $u\in\calA^\infty(x_0)$. 
On the other hand, since $X_{\mathrm{tp}}$ is bounded, there exists some $M>0$ such that
\begin{equation}\label{boundx3}
(x_0)_3\leq M,\quad \forall x_0\in X_{\mathrm{tp}}.
\end{equation}
Besides, notice that $x_3(N+1)=(x_0)_3-\Sigma_{i=0}^{N}u(x(i))$ for all $N\in\N$. 
As a consequence of \eqref{law} and \eqref{boundx3}, after at most $M$ steps, the term $x_3-\sqrt{x_1^2+x_2^2}$ would become smaller than $1$. So,
\begin{equation*}
u(x(M))=x_3(M)-e^{-0.1}\sqrt{x_1^2(M)+x_2^2(M)}
\end{equation*}
and
\begin{align*}
x_3(M+1)&=x_3(M)-x_3(M)+e^{-0.1}\sqrt{x_1^2(M)+x_2^2(M)}\\
&=e^{-0.1}\sqrt{x_1^2(M)+x_2^2(M)},
\end{align*}
and therefore by \eqref{stablepart} we have that
\begin{equation*}
x_3(M+1)-\sqrt{x_1^2(M+1)+x_2^2(M+1)})=0.
\end{equation*}
Again by induction, we obtain that
\begin{equation*}
x_3(N+1)=e^{-0.1}\sqrt{x_1^2(N)+x_2^2(N)},\;\;\forall N\in\N,\ N\geq M.
\end{equation*}

Finally, the above estimate and \eqref{stablepart} imply that the initial set $X_{\mathrm{tp}}$ can be stabilized to $(x_e,u_e)$ in uniform exponential rate. Hence, by Corollary \ref{main2}, the measure turnpike property is satisfied on $X_{\mathrm{tp}}$. 
\end{example}

\bibliographystyle{LCSS}
\bibliography{references.bib}

\end{document}